\newtheorem{thm}{Theorem}[section]
\newtheorem{lem}[thm]{Lemma}
\newtheorem{ex}[thm]{Example}
\theoremstyle{definition}
\newtheorem{defn}[thm]{Definition}
\theoremstyle{remark}
\newtheorem{rem}[thm]{Remark}
\numberwithin{equation}{section}
\begin{document}

\title[ On a consistent estimator of a useful signal]{On a consistent estimator of a useful signal in  Ornstein-Uhlenbeck stochastic model  in  $\mathbb{C}[-l,l[$ }%

\author{Levan Labadze }%
\address{Department of  Mathematics, Georgian Technical University,  Tbilisi , Georgia}%
\email{levanlabadze@yahoo.com}%

\author{Zurab Kvatadze}%
\address{Department of  Mathematics, Georgian Technical University,  Tbilisi , Georgia}%
\email{zurakvatadze@yahoo.com}%

\author{Gogi Pantsulaia}%
\address{Department of  Mathematics, Georgian Technical University,  Tbilisi , Georgia}%
\email{gogipantsulaia@yahoo.com}%

\medskip

\medskip

\thanks{}%
\subjclass{60G15, 60G10,	60G25, 	62F10, 91G70,  	91G80  }%
\keywords{Animation of the Ornstein-Uhlenbeck process; Consistent estimator of the useful signal }%

%\date{}%
%\dedicatory{Dedicated to the memory  of the
%Tbilisi  State University  Professor   Grigol Sokhadze}%
%\commby{}%
% ----------------------------------------------------------------

\begin{abstract} ~It is considered a transmittion process  of a useful  signal  in Ornstein-Uhlenbeck stochastic model   in  $\mathbb{C}[-l,l[$   defined by  the stochastic
differential equation
$$
d\Psi(t,x,\omega)=\sum_{n=0}^{2m}
A_n\frac{\partial^{n}}{\partial x^{n}}\Psi(t,x,\omega)dt +\sigma d W(t,\omega)
$$
with initial condition
$$\Psi(0,x,\omega)=\Psi_0(x)  \in FD^{(0)}[-l,l[,
$$
where  $m \ge 1$,   $(A_n)_{0 \le n \le 2m} \in \mathbb{R}^+\times  \mathbb{R}^{2m-1}$,$~((t,x,\omega) \in
[0,+\infty[\times [-l,l[ \times \Omega)$,   $\sigma \in  \mathbb{R}^+$, $\mathbb{C}[-l,l[$  is  Banach space  of  all real-valued bounded continuous functions on $[-l,l[$,   $FD^{(0)}[-l,l[  \subset \mathbb{C}[-l,l[ $ is class of all real-valued bounded continuous functions on  $[-l,l[$  whose Fourier series converges to himself everywhere on $[-l,l[$, $(W(t,\omega))_{t \ge 0}$ is a Wiener process and $\Psi_0(x)$ is a useful signal.

By use  a sequence of  transformed signals $(Z_k)_{k \in N}=(\Psi(t_0,x,\omega_k))_{k \in N}$ at  moment $t_0>0$,  consistent and  infinite-sample  consistent estimates  of the useful signal $\Psi_0$ is constructed  under assumption  that   parameters  $(A_n)_{0 \le n \le 2m}$ and $\sigma$ are known.  Animation and simulation of the  Ornstein-Uhlenbeck process in Banach space $\mathbb{C}[-l,l[$ and   results of calculations of   estimates  of a useful signal  in the same stochastic model  are also  presented.

.
\end{abstract}
\maketitle
% ----------------------------------------------------------------

\section{Introduction}

Suppose that $\Theta$ is a vector subspace of the
Banach space $\mathbb{C}[-l,l[$ equipped with usual norm,
where  $\mathbb{C}[-l,l[$  denotes  the class of all  bounded continuous functions on $[-l,l[$.

In the information transmitting theory we consider  Ornstein-Uhlenbeck stochastic  system
$$
\xi(t,x,\omega)_{x \in [-l,l[}=e^{t \sum_{n=0}^{2m}
A_n\frac{\partial^{n}}{\partial x^{n}}}\theta(x)_{x \in [-l,l[}
+$$
$$\sigma \int_0^t e^{ (t-\tau) \sum_{n=0}^{2m}
A_n\frac{\partial^{n}}{\partial x^{n}}} \times I_{[-l,l[}(x) d W(\tau,\omega),
\eqno(1.1)
$$
where   $(A_n)_{0 \le n \le 2m} \in \mathbb{R}^+\times  \mathbb{R}^{2m-1}(m \ge 1)$,
$\theta \in \Theta$  is a useful signal, $(W(t, \cdot ))_{t \ge 0}$  is  Winner  processes(the
so-called ``white noises" ) defined on the
probability space $(\Omega,{{\bf F}},P)$,  $(\xi(t,\cdot, \omega))$(equivalently, $\xi(t,x,\omega)_{x \in [-l,l[}$)  is a transformed signal   for  $(t,\omega) \in
[0,+\infty[\times \Omega)$, $ I_{[-l,l[}$ denotes the indicator function of the interval $[-l,l[$.

 Let $\mu$ be a
Borel probability measure on $\mathbb{C}^{[0,+\infty[}[-l,l[$ defined by  generalized
``white noise"
$$
\big(\sigma \int_0^t e^{ (t-\tau) \sum_{n=0}^{2m}
A_n\frac{\partial^{n}}{\partial x^{n}}} \times I_{[-l,l[} d W(\tau,\omega)\big )_{t \ge 0}.
\eqno(1.2)
$$
 Then  we have
$$
 (\forall X)(X \in {\bf B}\big ( \mathbb{C}^{[0,+\infty[}[-l,l[ \big ) \rightarrow
\mu(X)=P(\{ \omega : \omega \in \Omega ~
$$
$$
\&~
 \big( \sigma \int_0^t e^{ (t-\tau) \sum_{n=0}^{2m}
A_n\frac{\partial^{n}}{\partial x^{n}}} \times I_{[-l,l[}d W_k(\tau,\omega)\big)_{t  \ge 0} \in X\})), \eqno(1.3)
$$
where $ {\bf B} \big(\mathbb{C}^{[0,+\infty[}[-l,l[\big)$ is the Borel  $\sigma$-algebra
of subsets of the space  $\mathbb{C}^{[0,+\infty[}[-l,l[$.

Let $\lambda$ be a
Borel probability measure on $\mathbb{C}^{[0,+\infty[}[-l,l[$ defined by transformed signal  $\xi(t,x,\omega)$ that is

$$
 (\forall X)(X \in {\bf B}\big ( \mathbb{C}^{[0,+\infty[}[-l,l[ \big ) \rightarrow
\mu(X)=P(\{ \omega : \omega \in \Omega ~
$$
$$
\&~
 \big( \xi(t,x, \omega) \big)_{t  \ge 0} \in X\})), \eqno(1.4)
$$

  In the information transmitting theory, the general decision is that the Borel probability measure
$\lambda$, defined  by the  transformed signal coincide with $e^{t \sum_{n=0}^{2m}
A_n\frac{\partial^{n}}{\partial x^{n}}}\theta$ shift   $\mu_{\theta}$ of the measure $\mu$ for some $\theta_0 \in \Theta$ provided that
$$
(\exists \theta_0)( \theta_0 \in \Theta \rightarrow (\forall X)(X
\in {\bf B} \big(\mathbb{C}^{[0,+\infty[}[-l,l[ \big )\rightarrow
\lambda(X)=\mu_{\theta}(X)), \eqno(1.5)
$$
where $\mu_{\theta}(\cdot)=\mu(\cdot + \big(e^{t \sum_{n=0}^{2m}
A_n\frac{\partial^{n}}{\partial x^{n}}}\theta \big) _{t \ge 0})$.

Here  we consider a particular case of the above model when
a vector space of  useful signals $\Theta$ coincides with  $ FD^{0}[-l,l[$, where  $ FD^{0}[-l,l[  \subset  \mathbb{C}[-l,l[$   denotes  a vector space of all
bounded  continuous  real-valued functions on    $[-l,l[$    whose Fourier series converges to himself everywhere on $[-l,l[$.

\begin{defn} Following  \cite{Ibram80}, a triplet

$$
( \big( \mathbb{C}^{[0,+\infty[}[-l,l[ \big)^\mathbb{N}  ,{\bf B} \big( \big( \mathbb{C}^{[0,+\infty[}[-l,l[\big )^\mathbb{N}\big),\mu^\mathbb{N}_{\theta} \big)_{\theta
\in \Theta} \eqno(1.6)
$$
 is called a statistical structure described the
stochastic system (1.1).
\end{defn}

\begin{defn} Following  \cite{Ibram80}, a Borel measurable function  $T_n : \big(\mathbb{C}^{[0,+\infty[}[-l,l[ \big)^n \to \Theta~(n \in
\mathbb{N})$ is called a consistent estimate  of a parameter
$\theta$ for the family $(\mu_{\theta}^{\mathbb{N}})_{\theta \in
 \Theta }$ if the condition
$$
\mu_{\theta}^{\mathbb{N}} (\{ (x_k)_{k \in \mathbb{N}} :~(x_k)_{k
\in \mathbb{N}} \in \big(\mathbb{C}^{[0,+\infty[}[-l,l[\big)^{\mathbb{N}}~\&~ \lim_{n \to \infty}||T_n(x_1,
\cdots, x_n)-\theta||=0 \})=1\eqno(1.7)
$$
holds for each $\theta \in  \Theta$, where $||\cdot||$ is a usual norm in   $\mathbb{C}[-l,l[$.
\end{defn}

\begin{defn} Following  \cite{Ibram80}, a Borel measurable function  $T : \big(\mathbb{C}^{[0,+\infty[}[-l,l[ \big)^{\mathbb{N}} \to \Theta$  is called an infinite-sample  consistent estimate  of a parameter
$\theta$ for the family $(\mu_{\theta}^{\mathbb{N}})_{\theta \in
 \Theta}$ if the condition
$$
\mu_{\theta}^{\mathbb{N}} (\{ (x_k)_{k \in \mathbb{N}} :~(x_k)_{k
\in \mathbb{N}} \in \big(\mathbb{C}^{[0,+\infty[}[-l,l[\big)^{\mathbb{N}}~\&~ T((x_k)_{k \in \mathbb{N}})=\theta \})=1
\eqno(1.8)
$$
holds for each $\theta \in  \Theta$.
\end{defn}

The main goal of the present paper is construct  consistent   and  infinite-sample  estimators  of the useful signal  for  the stochastic model  (1.1)  which is a particular case of the  Ornstein-Uhlenbeck process in  $\mathbb{C}[-l,l[$.
Concerning estimations of parameters for another versions of the Ornstein-Uhlenbeck processes the reader  can consult with  \cite{GarbaczewskiOlkiewicz2000},  \cite{Pantsulaia2016},\cite{Labadze2016}, \cite{OrnsteinUhlenbeck1930}.

The rest of the present paper is the following.

Section  2 contains some auxiliary notions and fact from  theories of  ordinary   and stochastic differential  equations.

In Section 3 we  present our main results.

In Section  4 we  present animations  and simulations  of the  Ornstein-Uhlenbeck process in  $\mathbb{C}[-l,l[$ and
present results of calculations of the estimator of a useful signal when parameters  $(A_n)_{0 \le n \le 2m}$,$\sigma$ and  a sample of  transformed signals  at moment $t_0>0$   defined  by $(1.1)$  are known.

In Section  5  we consider discussion and conclusion.

\section{Materials and methods}

We  begin  this section by  a short description of a certain result  concerning  a solution of some
differential  equations  with initial  value problem obtained  in  the  paper \cite{Pantsulaia2016-1}. Further, by  use  this approach and  technique  developed  in \cite{Protter2004},  their some applications  for a solution  of the  Ornstein-Uhlenbeck stochastic differential equation in  $\mathbb{C}[-l,l[$   are  obtained.  At end of  this section, well known  Kolmogorov Strong Law of Large Numbers is  presented.

\begin{lem}[\cite{Pantsulaia2016-1}, Corollary 2.1, p. 6]
 ~For $m \ge 1$, let us consider a linear partial
differential equation
$$
\frac{\partial}{\partial t}\Psi(t,x)=\sum_{n=0}^{2m}
A_n\frac{\partial^{n}}{\partial x^{n}}\Psi(t,x)~((t,x) \in
[0,+\infty[\times [-l,l[)\eqno(2.1)
$$
with initial condition
$$\Psi(0,x)=\frac{c_0}{2}+\sum_{k=1}^{\infty}c_k
\cos\Big(\frac{k \pi x}{l}\Big)+d_k \sin \Big(\frac{k \pi
x}{l}\Big) \in FD^{(0)}[-l,l[.\eqno(2.2)
$$
If $(\frac{c_0}{2}, c_1, d_1, c_2, d_2, \dots )$ is such a
sequence of real numbers that a series $\Psi(t,x)$ defined by
$$ \Psi(t,x)=\frac{e^{tA_0}c_0}{2}+\sum_{k=1}^{\infty} e^{\sigma_k t}\Big((c_k\cos (\omega_k t)+
$$
$$d_k\sin(\omega_k t))\cos(\frac{k\pi x}{l})+
(d_k\cos (\omega_k t)-c_k \sin(\omega_k t))\sin(\frac{k\pi
x}{l})\Big)\eqno(2.3)  $$
 belongs to the class  $FD^{(2m)}[-l,l[$ as a series  of a variable  $x$ for all $t \ge 0$, and
 is differentiable term by term  as a series  of a variable  $t$ for all $x \in [-l,l[$, then  $\Psi$ is a
 solution of $(2.1)$-$(2.2)$.
\end{lem}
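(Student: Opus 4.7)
The plan is to verify the two conditions (2.1)--(2.2) separately by direct computation, exploiting the hypotheses that the series in (2.3) may be differentiated term by term in both variables.

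First I would check the initial condition. Setting $t=0$ in (2.3) collapses all exponentials and trigonometric factors of $t$ to $1$, $\cos 0 =1$, $\sin 0=0$, so the $k$-th summand reduces to $c_k\cos(k\pi x/l)+d_k\sin(k\pi x/l)$ and the constant term to $c_0/2$, recovering (2.2). Thus (2.2) is automatic.

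Next I would check the PDE (2.1). By the term-by-term $t$-differentiability hypothesis,
\[
\frac{\partial}{\partial t}\Psi(t,x)=\frac{A_0 e^{tA_0}c_0}{2}+\sum_{k=1}^{\infty} e^{\sigma_k t}\Bigl(U_k(t)\cos(k\pi x/l)+V_k(t)\sin(k\pi x/l)\Bigr),
\]
where $U_k(t),V_k(t)$ are the explicit linear combinations of $c_k,d_k,\cos(\omega_k t),\sin(\omega_k t)$ obtained by the product rule. For the right-hand side, the hypothesis $\Psi(t,\cdot)\in FD^{(2m)}[-l,l[$ permits term-by-term $x$-differentiation up to order $2m$. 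Using
\[
\frac{\partial^{2j}}{\partial x^{2j}}\cos(k\pi x/l)=(-1)^{j}(k\pi/l)^{2j}\cos(k\pi x/l),\quad
\frac{\partial^{2j+1}}{\partial x^{2j+1}}\cos(k\pi x/l)=-(-1)^{j}(k\pi/l)^{2j+1}\sin(k\pi x/l),
\]
and the analogous formulas for $\sin(k\pi x/l)$, each $k$-th mode is mapped by $\sum_{n=0}^{2m}A_n\partial_x^{n}$ into a linear combination of $\cos(k\pi x/l)$ and $\sin(k\pi x/l)$ whose coefficients are precisely $\sigma_k$ (the sum of the even-order contributions) and $\pm\omega_k$ (the sum of the odd-order contributions), which is how these constants are defined.

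It then remains to match the two sides mode by mode. This reduces, for each $k\ge 1$, to verifying that the $2\times 2$ amplitude vector
\[
\begin{pmatrix} c_k\cos(\omega_k t)+d_k\sin(\omega_k t) \\ d_k\cos(\omega_k t)-c_k\sin(\omega_k t)\end{pmatrix}
\]
multiplied by $e^{\sigma_k t}$ satisfies the linear ODE system whose matrix has eigenvalues $\sigma_k\pm i\omega_k$; this is an elementary check, and the $k=0$ case reduces to $\frac{d}{dt}(e^{tA_0}c_0/2)=A_0\cdot e^{tA_0}c_0/2$. The main (and essentially only) obstacle is bookkeeping the sign pattern arising from $\partial_x^{2j+1}$ applied to $\cos$ and $\sin$ so that the odd-order terms correctly produce the rotation generator encoded in (2.3); once this is done, the term-by-term equality $\partial_t\Psi=\sum_n A_n\partial_x^{n}\Psi$ holds. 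Since convergence issues have been removed by hypothesis, no further analytic work is required.
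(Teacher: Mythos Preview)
The paper does not actually prove this lemma: it is quoted verbatim from \cite{Pantsulaia2016-1} (Corollary~2.1 there) and is used as a black box, so there is no in-paper argument to compare against.

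Your direct-verification plan is correct and is exactly the natural proof. One small remark: the constants $\sigma_k$ and $\omega_k$ appearing in (2.3) are never defined in the present paper --- they come from the cited reference --- but your computation recovers what they must be, namely
\[
\sigma_k=\sum_{j=0}^{m}(-1)^{j}A_{2j}\Bigl(\tfrac{k\pi}{l}\Bigr)^{2j},\qquad
\omega_k=\sum_{j=0}^{m-1}(-1)^{j}A_{2j+1}\Bigl(\tfrac{k\pi}{l}\Bigr)^{2j+1}
\]
(up to a sign convention on $\omega_k$). With these identifications the operator $\sum_{n}A_n\partial_x^{n}$ acts on the span of $\cos(k\pi x/l),\sin(k\pi x/l)$ by the matrix $\bigl(\begin{smallmatrix}\sigma_k & \omega_k\\ -\omega_k & \sigma_k\end{smallmatrix}\bigr)$, and the $k$-th block of (2.3) is precisely $e^{t\sigma_k}$ times the rotation $\bigl(\begin{smallmatrix}\cos\omega_k t & \sin\omega_k t\\ -\sin\omega_k t & \cos\omega_k t\end{smallmatrix}\bigr)$ applied to $(c_k,d_k)^{\top}$, i.e.\ the matrix exponential of that $2\times 2$ block. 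Your mode-by-mode ODE check is therefore exactly the right reduction, and the term-by-term differentiability hypotheses remove all analytic obstacles. Nothing is missing.
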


\medskip

By use an approach developed in \cite{Protter2004}, we get the validity of the following assertion.

\medskip

\begin{lem}
 ~For $m \ge 1$, let us consider Ornstein-Uhlenbeck process in  $\mathbb{C}[-l,l[$ defined by  the stochastic
differential equation
$$
d\Psi(t,x,\omega)=\sum_{n=0}^{2m}
A_n\frac{\partial^{n}}{\partial x^{n}}\Psi(t,x,\omega)dt +\sigma d W(t,\omega) I_{[-l,l[}(x) ~((t,x,\omega) \in
[0,+\infty[\times [-l,l[ \times \Omega)\eqno(2.4)
$$
with initial condition
$$\Psi(0,x,\omega)=\Psi_0(x), \eqno(2.5)
$$
where
 $(A_n)_{0 \le n \le 2m} \in \mathbb{R}^+\times  \mathbb{R}^{2m-1}$,$ (W(t,\omega))_{t \ge 0}$ is a Wiener process and
 $$
\Psi_0(x)=\frac{c_0}{2}+\sum_{k=1}^{\infty}c_k
\cos\Big(\frac{k \pi x}{l}\Big)+d_k \sin \Big(\frac{k \pi
x}{l}\Big) \in FD^{(0)}[-l,l[.  \eqno(2.6)
$$

If $(\frac{c_0}{2}, c_1, d_1, c_2, d_2, \dots )$ is such a
sequence of real numbers that a series
$$ \frac{e^{tA_0}c_0}{2}+\sum_{k=1}^{\infty} e^{\sigma_k t}\Big((c_k\cos (\omega_k t)+
$$
$$d_k\sin(\omega_k t))\cos(\frac{k\pi x}{l})+
(d_k\cos (\omega_k t)-c_k \sin(\omega_k t))\sin(\frac{k\pi
x}{l})\Big)   +\sigma\int_0^t e^{(t-\tau)A_0} d W_{\tau}(\omega)I_{[-l,l[}(x) \eqno(2.7)  $$
 belongs to the class  $FD^{(2m)}[-l,l[$ as a series  of a variable  $x$ for all $t \ge 0, \omega \in \Omega$, and
 is differentiable term by term  as a series  of a variable  $t$ for all $x \in [-l,l[, \omega \in \Omega$, then
the solution of $(2.5)$-$(2.6)$  is given by

$$\Psi(t,x,\omega) =e^{t \sum_{n=0}^{2m}
A_n\frac{\partial^{n}}{\partial x^{n}}}(\Psi(0,x,\omega) ) +\sigma \int_0^te^{(t-s) \sum_{n=0}^{2m}
A_n\frac{\partial^{n}}{\partial x^{n}}} dW(s,\omega).
$$
\end{lem}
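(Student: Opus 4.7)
The plan is to use the classical variation-of-constants (Duhamel) decomposition for linear SPDEs, exploiting the linearity of equation (2.4). Write $\Psi = \Psi^{(1)} + \Psi^{(2)}$, where $\Psi^{(1)}(t,x,\omega):=e^{t\sum_{n=0}^{2m}A_n\partial^n/\partial x^n}\Psi_0(x)$ is the deterministic semigroup evolution of the initial value, and $\Psi^{(2)}(t,x,\omega):=\sigma\int_0^t e^{(t-s)\sum_{n=0}^{2m}A_n\partial^n/\partial x^n}I_{[-l,l[}(x)\,dW(s,\omega)$ is the stochastic convolution driven by the noise with zero initial condition. For $\Psi^{(1)}$ I would invoke Lemma 2.1 directly: the first summand on the right of $(2.7)$ is precisely the series $(2.3)$, so under the stated convergence hypotheses $\Psi^{(1)}$ solves the homogeneous PDE $(2.1)$ with initial condition $\Psi_0$ in the classical sense.

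The bulk of the argument concerns $\Psi^{(2)}$. Since $I_{[-l,l[}(x)$ is constant in $x$ on the open interval, every eigenfunction expansion of the operator $\sum_{n=0}^{2m}A_n\partial^n/\partial x^n$ collapses to the $k=0$ mode, so the operator exponential reduces to the scalar factor $e^{(t-s)A_0}$, giving
$$\Psi^{(2)}(t,x,\omega)=\sigma I_{[-l,l[}(x)\int_0^t e^{(t-s)A_0}\,dW(s,\omega),$$
which is exactly the stochastic term appearing in hypothesis $(2.7)$. Next, applying the Itô product formula to $e^{-tA_0}\Psi^{(2)}$ (or, equivalently, following the stochastic Leibniz computation in Protter \cite{Protter2004}), one verifies that the $dt$-coefficient of $d\Psi^{(2)}$ equals $A_0\Psi^{(2)}$, while the higher $x$-derivatives $\partial^n\Psi^{(2)}/\partial x^n$ ($1\le n\le 2m$) vanish on the interior of $[-l,l[$; hence the drift equals $\sum_{n=0}^{2m}A_n\partial^n/\partial x^n\Psi^{(2)}$, and the martingale part equals $\sigma\,dW(t,\omega)I_{[-l,l[}(x)$. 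Summing with $d\Psi^{(1)}$, obtained from Lemma 2.1, yields equation $(2.4)$, and the initial condition $(2.5)$ is immediate since $\Psi^{(2)}(0,\cdot,\cdot)=0$.

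The main obstacle I anticipate is rigorously justifying the interchange of the differential operator $\sum_{n=0}^{2m}A_n\partial^n/\partial x^n$ with the stochastic integral in $dW(s,\omega)$, and of $\partial/\partial t$ with the series in $(2.7)$. This is precisely the purpose of the two hypotheses of the lemma: termwise $FD^{(2m)}$-convergence in $x$ for every fixed $(t,\omega)$ controls the spatial derivatives mode by mode via Fourier calculus, while termwise differentiability in $t$ for every fixed $(x,\omega)$ controls the time derivative. Combined with a stochastic Fubini argument applied to the explicit representation of $\Psi^{(2)}$ above, these assumptions allow the operator to be pulled inside the $dW$-integral and the $t$-derivative inside the series, at which point the verification reduces to the deterministic computation already carried out in Lemma 2.1 plus the one-dimensional Ornstein--Uhlenbeck identity for the mode $k=0$.
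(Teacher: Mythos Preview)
Your proof is correct, but it proceeds in the opposite direction from the paper. The paper's argument is a derivation: it writes $\mathbb{A}=\sum_{n=0}^{2m}A_n\partial^n/\partial x^n$, applies It\^o's formula to the single auxiliary process $f(t,\Psi)=e^{-t\mathbb{A}}\Psi(t,x,\omega)$, observes that the drift terms cancel so that $df=\sigma e^{-t\mathbb{A}}dW$, integrates from $0$ to $t$, and then left-multiplies by $e^{t\mathbb{A}}$ to obtain the mild-solution formula. Your argument is instead a verification: you write down the candidate $\Psi=\Psi^{(1)}+\Psi^{(2)}$ in advance, invoke Lemma~2.1 for the homogeneous piece, and check the stochastic convolution separately by reducing it to the scalar Ornstein--Uhlenbeck identity on the $k=0$ Fourier mode. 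What the paper's route buys is brevity and the fact that it does not require guessing the answer; what your route buys is an explicit explanation of \emph{why} the stochastic term collapses to the scalar integral $\sigma\int_0^t e^{(t-\tau)A_0}dW_\tau\, I_{[-l,l[}$ (namely, the noise excites only the constant mode), a point the paper records only afterwards in Remark~2.3 without justification. Both arguments ultimately rest on the same integrating-factor computation, applied either to the full solution or to $\Psi^{(2)}$ alone.
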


\begin{proof}

Putting $\mathbb{A}=\sum_{n=0}^{2m}
A_n \partial^{n}/\partial x^{n}$
and
$
f(t, \Psi(t,x,\omega))=e^{-t\mathbb{A} }\Psi(t,x,\omega),
$
we get
$$
df(t, \Psi(t,x,\omega))=-\mathbb{A} e^{-t \mathbb{A} } \Psi(t,x,\omega)dt+e^{-t \mathbb{A} }d\Psi(t,x,\omega)
$$
$$
=-\mathbb{A} e^{-t \mathbb{A} } \Psi(t,x,\omega)dt+e^{-t \mathbb{A} }(\mathbb{A}\Psi(t,x,\omega)+\sigma dW(t,\omega))=
\sigma e^{-t \mathbb{A} }dW(t,\omega).
$$
By integration of both sides we get
$$
f(t, \Psi(t,x,\omega))-f(0, \Psi(0,x,\omega))=\sigma \int_0^t e^{-\tau\mathbb{A} } d W(\tau,\omega)
 $$
which implies
$$
e^{-t\mathbb{A} }\Psi(t,x,\omega)-e^{-0\mathbb{A} }\Psi(0,x,\omega)=\sigma \int_0^t e^{-\tau\mathbb{A} } d W(\tau,\omega).
$$
Now we get
$$
e^{t\mathbb{A} }(e^{-t\mathbb{A} }(\Psi(t,x,\omega))-e^{t\mathbb{A} }(e^{-0\mathbb{A} }(\Psi(0,x,\omega)))=e^{t\mathbb{A} }(\sigma \int_0^t e^{-\tau\mathbb{A} } d W(\tau,\omega)),
$$
which is equivalent to the  equality
$$
\Psi(t,x,\omega))=e^{t\mathbb{A} }(\Psi(0,x,\omega))+\sigma \int_0^t e^{(t-\tau)\mathbb{A} } I_{[-l,l[}(x) d W(\tau,\omega)).
$$

\end{proof}

\begin{rem} ~Under condition of Lemma 2.2  we have
$$\Psi(t,x,\omega)=\frac{e^{tA_0}c_0}{2}+\sum_{k=1}^{\infty} e^{\sigma_k t}\Big((c_k\cos (\omega_k t)+
$$
$$d_k\sin(\omega_k t))\cos(\frac{k\pi x}{l})+
(d_k\cos (\omega_k t)-c_k \sin(\omega_k t))\sin(\frac{k\pi
x}{l})\Big)  +
$$
$$
\sigma \int_0^t e^{(t-\tau)A_0} I_{[-l,l[}(x) d W(\tau,\omega)).   \eqno(2.8)  $$
 \end{rem}

\begin{lem}  Under  conditions  of  Lemma 2.2, the following  conditions are valid:

(i)~ $E \Psi(t,x, \cdot)=  \frac{e^{tA_0}c_0}{2}+\sum_{k=1}^{\infty} e^{\sigma_k t}\Big((c_k\cos (\omega_k t)+d_k\sin(\omega_k t))\cos(\frac{k\pi x}{l})$

$+(d_k\cos (\omega_k t)-c_k \sin(\omega_k t))\sin(\frac{k\pi x}{l})\Big);$

(ii) ~  $\mbox{cov}(\Psi(s,x,\cdot),\Psi(t,x,\cdot))=\frac{\sigma^2}{2A_0}\left( e^{-A_0(t-s)} - e^{-A_0(t+s)} \right) ;$

(iii) ~$\mbox{var}(\Psi(s,x,\cdot)) = \frac{\sigma^2}{2 A_0}\left( 1 - e^{-2 A_0 s} \right);$

\end{lem}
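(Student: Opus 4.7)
The plan is to exploit the explicit representation (2.8) from Remark 2.3, which writes the solution as
$$\Psi(t,x,\omega) = M(t,x) + N(t,x,\omega),$$
where $M(t,x)$ is the deterministic Fourier-series part appearing on the right-hand side of (i), and
$$N(t,x,\omega) = \sigma \int_0^t e^{(t-\tau)A_0}\, I_{[-l,l[}(x)\, dW(\tau,\omega)$$
is a stochastic integral of a deterministic integrand against the driving Brownian motion. All three statements are then first- and second-moment computations for this decomposition.

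For (i), since $N(t,x,\cdot)$ is an It\^o integral with a deterministic integrand, it is a centered Gaussian random variable; in particular $E[N(t,x,\cdot)] = 0$. Linearity of expectation therefore gives $E[\Psi(t,x,\cdot)] = M(t,x)$, which is precisely the claimed expression.

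For (ii), after centering, the covariance reduces to $E[N(s,x,\cdot)\,N(t,x,\cdot)]$, and the It\^o isometry for deterministic integrands yields
$$E[N(s,x,\cdot)\,N(t,x,\cdot)] = \sigma^2 \int_0^{\min(s,t)} e^{(s-\tau)A_0}\, e^{(t-\tau)A_0}\, d\tau,$$
using $I_{[-l,l[}(x)^2 = I_{[-l,l[}(x) = 1$ for $x \in [-l,l[$. Taking $s\le t$ without loss of generality and evaluating the elementary antiderivative in $\tau$ collapses the expression to $\frac{\sigma^2}{2A_0}\bigl(e^{-A_0(t-s)}-e^{-A_0(t+s)}\bigr)$. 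Statement (iii) follows immediately by setting $s=t$.

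The main obstacle is not conceptual but notational: the sign of the exponent $A_0$ in the stochastic integral must be tracked consistently so that the antiderivative in the It\^o-isometry computation collapses to the stable, bounded quantities asserted in (ii)--(iii) rather than to a divergent exponential, which in particular means one must effectively read the semigroup acting on the zeroth Fourier mode as a contraction. Beyond this bookkeeping, no additional probabilistic machinery is needed; the argument is a routine application of the It\^o isometry and linearity of expectation to the explicit solution provided by Remark 2.3.
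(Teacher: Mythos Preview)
Your proposal is correct and follows essentially the same route as the paper: split $\Psi$ into its deterministic part and the stochastic integral, use that the It\^o integral of a deterministic integrand is centered for (i), and apply the It\^o isometry to compute (ii)--(iii). Your explicit flagging of the sign issue in the exponent is in fact more careful than the paper, which silently writes the integrand as $e^{A_0(u-s)}$ in the covariance computation even though formula~(2.8) carries $e^{(t-\tau)A_0}$; the stated conclusions (ii)--(iii) are only obtained with the contraction sign, exactly as you note.
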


\begin{proof}  The validity  of the item (i) is obvious.  In order to prove the validity of the items (ii)-(iii),  we can use the Ito  isometry  to calculate the covariance function  by
$$
\mbox{cov}(\Psi(s,x,\cdot),\Psi(t,x,\cdot)) = E[(\Psi(s,x,\cdot) - E[\Psi(s,x,\cdot)])(\Psi(s,t,\cdot) - E[\Psi(s,t,\cdot)])]
$$
$$
 = E \left[ \int_0^s \sigma  e^{A_0 (u-s)}\, dW(u,\omega) \int_0^t \sigma  e^{A_0 (v-t)}\, dW(v,\omega)\right]
$$
$$
 = \sigma^2 e^{-A_0(s+t)}E \left[ \int_0^s  e^{A_0 u}\, dW(u,\omega) \int_0^t  e^{A_0 v}\, dW(v,\omega) \right]
$$
$$
= \frac{\sigma^2}{2A_0} \, e^{-A_0 (s+t)}(e^{2A_0 \min(s,t)}-1).
$$
Thus if $s<t$(so that $min(s,t)=s$), then we have
$$\mbox{cov}(\Psi(s,x,\cdot),\Psi(t,x,\cdot)) = \frac{\sigma^2}{2A_0}\left( e^{-A_0(t-s)} - e^{-A_0(t+s)} \right).
$$
Similarly, if $s=t$ (so that $min(s,t)=s)$, then we have
$$\mbox{var}(\Psi(s,x,\cdot)) = \frac{\sigma^2}{2A_0}\left( 1 - e^{-2A_0 s} \right).
$$

\end{proof}

In the next section  we will need the well known fact from the
probability theory (see, for example, \cite{Sh80}, p. 390).
\medskip

\begin{lem}  (Kolmogorov's strong law of large numbers)
 Let $X_1, X_2, ...$  be a  sequence of independent identically
distributed random variables  defined on the probability space
$(\Omega, \mathcal{F},P)$. If  these random variables have a
finite expectation $m$ (i.e., $E(X_1) = E(X_2) = ... = m <
\infty$), then the following condition
$$
P(\{ \omega : \lim_{n \to \infty} n^{-1}\sum_{k=1}^nX_k(\omega)=m
\})=1  \eqno(2.9)
$$
holds  true.
\end{lem}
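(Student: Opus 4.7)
The plan is to follow the now-classical Etemadi-style argument, since it only requires finite first moments (exactly the hypothesis) and avoids the three-series machinery.

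First I would reduce to the nonnegative case by writing $X_k = X_k^+ - X_k^-$; each piece still has finite expectation, and it suffices to prove the SLLN for the two nonnegative sequences separately and subtract. From now on assume $X_k\ge 0$ with $E X_1 = m$. The next step is truncation: set $Y_k = X_k \mathbf{1}_{\{X_k \le k\}}$ and $T_n = \sum_{k=1}^n Y_k$. Because the $X_k$ are identically distributed,
\[
\sum_{k=1}^{\infty} P(X_k \ne Y_k) \;=\; \sum_{k=1}^{\infty} P(X_1 > k) \;\le\; E X_1 \;<\; \infty,
\]
so the Borel--Cantelli lemma gives $X_k = Y_k$ for all sufficiently large $k$ almost surely. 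Consequently $\frac{1}{n}(S_n - T_n) \to 0$ a.s., and it is enough to prove $T_n/n \to m$ a.s.

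Next I would split $T_n/n = (T_n - E T_n)/n + E T_n/n$. Dominated convergence applied to $E Y_k = E[X_1 \mathbf{1}_{\{X_1 \le k\}}]$ shows $E Y_k \to m$, and Cesaro gives $E T_n/n \to m$. So the real task is showing $(T_n - E T_n)/n \to 0$ a.s. For this I would estimate the variances: since $Y_k$ are independent,
\[
\operatorname{Var}(T_n) \;=\; \sum_{k=1}^n \operatorname{Var}(Y_k) \;\le\; \sum_{k=1}^n E[X_1^2 \mathbf{1}_{\{X_1 \le k\}}],
\]
and a standard rearrangement plus $\sum_{k \ge x} k^{-2} \le 2/x$ yields $\sum_{k=1}^{\infty} \operatorname{Var}(Y_k)/k^2 \le 2 E X_1 < \infty$. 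Fixing $\alpha > 1$ and the subsequence $k_n = \lfloor \alpha^n \rfloor$, Chebyshev combined with this bound gives
\[
\sum_{n=1}^{\infty} P\!\left( \left| \frac{T_{k_n} - E T_{k_n}}{k_n} \right| > \varepsilon \right) \;<\; \infty
\]
for every $\varepsilon > 0$, so Borel--Cantelli yields $T_{k_n}/k_n \to m$ a.s.

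Finally I would fill in the gaps. Since $Y_k \ge 0$, the sequence $T_n$ is nondecreasing, so for $k_n \le j < k_{n+1}$ we have $T_{k_n}/k_{n+1} \le T_j/j \le T_{k_{n+1}}/k_n$. Using $k_{n+1}/k_n \to \alpha$, this squeezes $T_j/j$ between $m/\alpha$ and $m\alpha$ almost surely. Letting $\alpha \downarrow 1$ along a countable sequence gives $T_n/n \to m$ a.s., which combined with the earlier reductions completes the proof. The only genuinely delicate step is the variance/subsequence estimate; everything else is bookkeeping, and the monotonicity gap-filling is the reason the nonnegativity reduction was worth making at the start.
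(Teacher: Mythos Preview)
Your argument is correct: this is exactly Etemadi's proof of the strong law, and each step (nonnegative reduction, truncation, Borel--Cantelli comparison of $X_k$ with $Y_k$, the variance bound $\sum_k \operatorname{Var}(Y_k)/k^2 \le C\,E X_1$, the geometric subsequence plus Chebyshev, and the monotonicity sandwich) is standard and carried out properly. There is nothing to compare against, however: the paper does not prove this lemma at all but simply cites it as a well-known fact from Shiryaev's textbook. Your write-up therefore supplies strictly more than the paper does.
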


\section{Results}

In this  section, by the use  of Kolmogorov Strong Law of Large Numbers   we construct   a consistent   and an infinite-sample  consistent   estimators   of a useful signal which is
transmitted    by the Ornstein-Uhlenbeck stochastic  system (1.1).

\begin{thm} Let consider  $\mathbb{C}[-l,l[$-valued  stochastic process  $(\xi(t,x, \omega)_{x \in [-l,l[}  )_{t \ge 0}$ defined  by
$$
\xi(t,x, \omega)_{x \in [-l,l[}=e^{t\mathbb{A} }(\theta(x)_{x \in [-l,l[} )+\sigma \int_0^t e^{(t-\tau)\mathbb{A} } I_{[-l,l[}(x)_{x \in [-l,l[} d W(\tau,\omega),\eqno(3.1)
$$
where $\theta \in FD^{(0)}$ and  $\mathbb{A}=\sum_{n=0}^{2m}
A_n\frac{\partial^{n}}{\partial x^{n}}$. Assume that  all  conditions of Lemma 2.9 are satisfied.
For a fixed  $t=t_0>0$, we denote  by $\mu_{\theta}$ a probability measure in $\mathbb{C}[-l,l[$   defined by the  random  element $(\Xi(t_0,\omega))$.
For  each $(Z_k)_{k \in \mathbb{N}} \in  \big(\mathbb{C}[-l,l[\big)^{\mathbb{N}}$ we put
$$
T_n((Z_k)_{k \in \mathbb{N}})=e^{-t_0\mathbb{A} }(\frac{\sum_{k=1}^n Z_k}{n}). \eqno(3.2)
$$
Then $T_n$ is a consistent estimate of a  useful signal $\theta$ provided
that
$$
\mu_{\theta}^{\mathbb{N}}\{ (Z_k)_{k \in \mathbb{N}}: \lim_{n \to \infty}|| T_n((Z_k)_{k \in \mathbb{N}})-\theta||=0\} =1 \eqno(3.3)
$$
for each $\theta \in FD^{(0)}[-l,l[.$
\end{thm}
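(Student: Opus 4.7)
The plan is to exploit the fact, visible in Remark 2.3, that the stochastic integral in the representation of $\xi(t_0,x,\omega)$ does not actually depend on $x$ on $[-l,l[$: the factor $I_{[-l,l[}(x)$ equals $1$ throughout, and the operator $e^{(t_0-\tau)\mathbb{A}}$ applied to the constant function $1$ yields the constant $e^{(t_0-\tau)A_0}$ (all non-zero Fourier coefficients of $1$ vanish except $c_0/2=1$, and Lemma 2.1 then reduces the series to the $A_0$ term). Consequently, for each $k$,
\begin{equation*}
Z_k(x) \;=\; \bigl(e^{t_0\mathbb{A}}\theta\bigr)(x) \;+\; \eta_k, \qquad x\in[-l,l[,
\end{equation*}
where $\eta_k:=\sigma\int_0^{t_0}e^{(t_0-\tau)A_0}\,dW(\tau,\omega_k)$ is a real-valued random variable (a constant function of $x$). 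By the i.i.d.\ assumption on the driving Wiener processes $(W(\cdot,\omega_k))_{k\in\mathbb N}$, the random variables $(\eta_k)_{k\in\mathbb N}$ are i.i.d., centered, with finite variance by Lemma 2.4(iii) (applied to the $0$-dimensional spatial mode).

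Next I would apply Kolmogorov's Strong Law of Large Numbers (Lemma 2.5) to the sequence $(\eta_k)_{k\in\mathbb N}$: since $E[\eta_1]=0$, there exists an event $\Omega_\theta$ of full $\mu_\theta^{\mathbb N}$-measure on which
\begin{equation*}
\bar\eta_n(\omega):=\frac{1}{n}\sum_{k=1}^n\eta_k(\omega)\;\longrightarrow\;0.
\end{equation*}
Then, because averaging commutes with the (linear, bounded on the image of the initial data under the semigroup) operator $e^{-t_0\mathbb{A}}$, one gets on $\Omega_\theta$:
\begin{equation*}
T_n\bigl((Z_k)_{k\in\mathbb N}\bigr)\;=\;e^{-t_0\mathbb{A}}\!\left(\frac{1}{n}\sum_{k=1}^n Z_k\right)\;=\;e^{-t_0\mathbb{A}}\!\left(e^{t_0\mathbb{A}}\theta+\bar\eta_n\right)\;=\;\theta\;+\;e^{-t_0\mathbb{A}}(\bar\eta_n).
\end{equation*}
Since $\bar\eta_n$ is a constant function on $[-l,l[$, the same Fourier-series computation as above shows $e^{-t_0\mathbb{A}}(\bar\eta_n)=e^{-t_0 A_0}\bar\eta_n$, again a constant function in $x$.

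Finally I would pass to the supremum norm:
\begin{equation*}
\bigl\|T_n\bigl((Z_k)_{k\in\mathbb N}\bigr)-\theta\bigr\|_{\mathbb{C}[-l,l[}\;=\;e^{-t_0 A_0}\,|\bar\eta_n(\omega)|\;\longrightarrow\;0 \quad\text{on }\Omega_\theta,
\end{equation*}
which, since $\mu_\theta^{\mathbb N}(\Omega_\theta)=1$, is exactly the consistency condition (3.3). The main subtlety to verify carefully is the first step, namely that the stochastic-integral noise in (2.8)/(3.1) really is a spatially constant random variable (so the sup norm of the centered average reduces to an absolute value of a scalar average) and that $e^{-t_0\mathbb{A}}$ acts on this constant by multiplication by $e^{-t_0 A_0}$; once this is in place the argument is a clean application of the Kolmogorov SLLN.
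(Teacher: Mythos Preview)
Your proposal is correct and follows essentially the same route as the paper: both arguments rest on the observation that the stochastic integral in (3.1) is a spatially constant (scalar) Gaussian random variable with mean zero and variance $\frac{\sigma^2}{2A_0}(1-e^{-2A_0 t_0})$, so that the problem reduces to applying Kolmogorov's SLLN to the i.i.d.\ scalar noises and reading off uniform convergence from convergence of the scalar averages. The only cosmetic difference is that the paper passes from $\|e^{-t_0\mathbb{A}}\bar Z_n-\theta\|$ to $\|\bar Z_n-e^{t_0\mathbb{A}}\theta\|$ and then identifies the latter with $|\bar\beta_n|$, whereas you compute $e^{-t_0\mathbb{A}}$ on the constant directly to get $e^{-t_0A_0}|\bar\eta_n|$; these are equivalent once one knows the noise is constant in $x$.
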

\begin{proof} For each $X \in B\big(\mathbb{C}[-l,l[\big)$ we have
$$
\mu_{\theta}(X)=P\{\omega: \Xi(t_0,\omega) \in X\}=P\{\omega: e^{t_0\mathbb{A} }(\theta)+\sigma \int_0^{t_0} e^{(t_0-\tau)\mathbb{A} } I_{[-l,l[} d W(\tau,\omega)) \in X\}=
$$
$$
P\{\omega:\sigma \int_0^{t_0} e^{(t_0-\tau)A_0}  d W(\tau,\omega))  I_{[-l,l[} \in \big(X- e^{t_0\mathbb{A} }(\theta) \big)   \cap \{ \alpha  I_{[-l,l[} : \alpha \in \mathbb{R} \} \}.
$$
We have
$$
\mu_{\theta}^{\mathbb{N}}\{ (Z_k)_{k \in \mathbb{N}}: \lim_{n \to \infty}|| T_n((Z_k)_{k \in \mathbb{N}})-\theta||=0\} =
$$

$$
\mu_{\theta}^{\mathbb{N}}\{ (Z_k)_{k \in \mathbb{N}}: \lim_{n \to \infty}|| e^{-t_0\mathbb{A} }(\frac{\sum_{k=1}^n Z_k}{n})-\theta||=0\} =
$$

$$
\mu_{\theta}^{\mathbb{N}}\{ (Z_k)_{k \in \mathbb{N}}: \lim_{n \to \infty}|| \frac{\sum_{k=1}^n Z_k}{n}- e^{t_0\mathbb{A} }(\theta)||=0\} =
$$

$$
\mu_{\theta}^{\mathbb{N}}\{ (Z_k)_{k \in \mathbb{N}}:  Z_k \in \{ \alpha_k I_{[-l,l[} : \alpha_k \in \mathbb{R}\}+ e^{t_0\mathbb{A} }(\theta) ~\&~ \lim_{n \to \infty}|| \frac{\sum_{k=1}^n Z_k}{n}- e^{t\mathbb{A} }(\theta)||=0\} =
$$

$$
\mu_{\theta}^{\mathbb{N}}\{ (Z_k)_{k \in \mathbb{N}}: ( \exists (\beta_k)_{k \in N}  \in \mathbb{R}^{\infty})( Z_k =\beta_k I_{[-l,l[} +
$$
$$e^{t_0\mathbb{A} }(\theta) ~\&~ \lim_{n \to \infty}|| \frac{\sum_{k=1}^n Z_k}{n}- e^{t_0 \mathbb{A}}(\theta)||$$
$$=0\} =\mu_{\theta}^{\mathbb{N}}\{ (Z_k)_{k \in \mathbb{N}}: ( \exists (\beta_k)_{k \in N}  \in \mathbb{R}^{\infty})( Z_k =\beta_k I_{[-l,l[} +
$$
$$e^{t_0\mathbb{A} }(\theta) ~\&~ \lim_{n \to \infty}|| \frac{\sum_{k=1}^n \beta_k}{n} I_{[-l,l[}||
$$
$$=0\} =\mu_{\theta}^{\mathbb{N}}\{ (Z_k)_{k \in \mathbb{N}}: ( \exists (\beta_k)_{k \in N}  \in \mathbb{R}^{\infty})( Z_k =\beta_k I_{[-l,l[} +
$$
$$e^{t_0\mathbb{A} }(\theta) ~\&~ \lim_{n \to \infty}|\frac{\sum_{k=1}^n \beta_k}{n} |=0\} =
$$
$$
{\gamma_{(0,s)}}^{\mathbb{N}}\{ \exists (\beta_k)_{k \in N}  \in \mathbb{R}^{\infty} :  \lim_{n \to \infty}|\frac{\sum_{k=1}^n \beta_k}{n} |=0\} =1,
$$
where $\gamma_{(0,\sigma)}$ denotes the Gaussian measure in  $\mathbb{R}$  with the mean  $0$ and  the variance  $s^2=\frac{\sigma^2}{2A_0} ( 1 - e^{-2A_0 t_0})$.  The validity of the last equality is a direct consequence of  Lemmas 2.9-2.10.

\end{proof}

\begin{thm}(Continue) Let  $\theta^{*} \in  FD^{(0)}$.  For   $(Z_k)_{k \in \mathbb{N}} \in  \big(\mathbb{C}[-l,l[\big)^{\mathbb{N}}$  we put $T(  (Z_k)_{k \in \mathbb{N}}) = lim_{n \to \infty}T_n(  (Z_k)_{k \in \mathbb{N}}),$ if  the sequence   $(T_n(  (Z_k)_{k \in \mathbb{N}}))_{n \to \mathbb{N}}$ is convergent  and this limit belongs to the class $ FD^{(0)}$, and $T(  (Z_k)_{k \in \mathbb{N}})=\theta^{*} $,  otherwise. Then $T : \big(\mathbb{C}[-l,l[\big)^{\mathbb{N}} \to  FD^{(0)}$ is an infinite-sample consistent estimate of  a useful signal  $\theta \in   FD^{(0)}$    with respect to family $\mu_{\theta}^{\mathbb{N}}$  provided that
the condition
$$
\mu_{\theta}^{\mathbb{N}} (\{ (Z_k)_{k \in \mathbb{N}} :~(Z_k)_{k
\in \mathbb{N}} \in \big(\mathbb{C}^{[0,+\infty[}[-l,l[\big)^{\mathbb{N}}~\&~ T((Z_k)_{k \in \mathbb{N}})=\theta \})=1 \eqno(3.4)
$$
holds for each $\theta \in  FD^{0}[-l,l[$.
\end{thm}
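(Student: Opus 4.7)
The plan is to deduce Theorem 3.2 almost immediately from Theorem 3.1 by observing that the consistency statement already furnishes the almost-sure convergence needed to pin down $T$ on a set of full $\mu_\theta^{\mathbb{N}}$-measure. Fix $\theta \in FD^{(0)}[-l,l[$ and denote
$$E_\theta = \{(Z_k)_{k \in \mathbb{N}} \in (\mathbb{C}[-l,l[)^{\mathbb{N}} : \lim_{n \to \infty}\|T_n(Z_1,\dots,Z_n) - \theta\| = 0\}.$$
By Theorem 3.1 we have $\mu_\theta^{\mathbb{N}}(E_\theta) = 1$. On $E_\theta$ the sequence $(T_n((Z_k)))_{n \in \mathbb{N}}$ converges in the norm of $\mathbb{C}[-l,l[$ to $\theta$, and the limit $\theta$ lies in $FD^{(0)}[-l,l[$ by assumption. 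So the first branch of the piecewise definition of $T$ applies, giving $T((Z_k)) = \theta$ on $E_\theta$.

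Hence $E_\theta \subseteq \{(Z_k) : T((Z_k)) = \theta\}$, and monotonicity of $\mu_\theta^{\mathbb{N}}$ immediately yields $\mu_\theta^{\mathbb{N}}(\{(Z_k) : T((Z_k)) = \theta\}) = 1$, which is precisely $(3.4)$. Since $\theta$ was arbitrary in $FD^{(0)}[-l,l[$, this is the full content of the theorem.

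The only point that requires a brief check, beyond invoking Theorem 3.1, is the Borel measurability of $T$ as a map from $(\mathbb{C}[-l,l[)^{\mathbb{N}}$ to $FD^{(0)}[-l,l[$. This is the place I expect the main bookkeeping obstacle. One writes
$$C = \bigcap_{j=1}^{\infty} \bigcup_{N=1}^{\infty} \bigcap_{n,m \geq N} \{(Z_k) : \|T_n((Z_k)) - T_m((Z_k))\| < 1/j\},$$
which is Borel because each $T_n$ is a continuous (hence Borel) function of $(Z_1,\dots,Z_n)$; this is the set on which $(T_n)$ is Cauchy, and on $C$ the pointwise limit $T_\infty((Z_k)) := \lim_n T_n((Z_k))$ exists and is a Borel map. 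The further restriction to the subset of $C$ where $T_\infty((Z_k)) \in FD^{(0)}[-l,l[$ is Borel as the preimage of the Borel subspace $FD^{(0)}[-l,l[ \subset \mathbb{C}[-l,l[$ under $T_\infty$. Off this Borel set $T$ is the constant $\theta^*$, so $T$ is Borel overall. With measurability settled, the probabilistic argument above completes the proof.
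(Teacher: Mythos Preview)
Your proof is correct and follows exactly the paper's approach: on the $\mu_\theta^{\mathbb{N}}$-full set where $\lim_n T_n=\theta$, the first branch of the definition of $T$ applies and gives $T=\theta$, so the inclusion $E_\theta\subseteq\{T=\theta\}$ together with monotonicity yields (3.4). Your added measurability verification is extra care that the paper's own proof simply omits.
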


\begin{proof}For  $\theta \in  FD^{0}[-l,l[$, by the use  the result of Theorem 3.1   we get

$$
\mu_{\theta}^{\mathbb{N}} (\{ (Z_k)_{k \in \mathbb{N}} :~(Z_k)_{k
\in \mathbb{N}} \in \big(\mathbb{C}^{[0,+\infty[}[-l,l[\big)^{\mathbb{N}}~\&~ T((Z_k)_{k \in \mathbb{N}})=\theta \}) \ge
$$
$$
\mu_{\theta}^{\mathbb{N}} (\{ (Z_k)_{k \in \mathbb{N}} :~(Z_k)_{k
\in \mathbb{N}} \in \big(\mathbb{C}^{[0,+\infty[}[-l,l[\big)^{\mathbb{N}}~\&~\lim_{n \to \infty} T_n((Z_k)_{k \in \mathbb{N}})=\theta\})=1.
$$
\end{proof}

\section{Animation and simulation of the  Ornstein-Uhlenbeck process in $\mathbb{C}[-l,l[$ and  an estimation of a useful signal}

There  exist  many approaches   and  codes  in Matlab  which can be used  for  simulations of  various  stochastic  processes  which   are  described  by   Ornstein-Uhlenbeck  stochastic differential  equations(see, for example  \cite{Gillespie1996}, \cite{Pantsulaia2016}, \cite{Labadze2016}, \cite{Smith2010}).  Our main attention is  devoted to  animation and  simulation of  a  useful signal  transmitting    processes   which  all  are   described by the  Ornstein-Uhlenbeck  stochastic system  (1.1).  We are going  also to demonstrate whether works the statistic $T_n$  constructed in Theorem 3.1.  In this context we present  some codes in Matlab which are described by the following examples. In all examples we assume that  $(\Omega, \mathbb{F},P)=({\bf R}^N,  \mathbb{B}({\bf R}^ N), \gamma^N)$, where  $\gamma^N$ denotes  $N$-power of the linear  standard Gaussian measure $\gamma$ in $R$.

\begin{ex}  Below we give an animation of the Ornstein-Uhlenbeck process in $\mathbb{C}[0,\pi[$ which is defined by the following stochastic differential equation
$$
d \Psi(t,x,\omega )=2 \Psi(t,x,\omega)dt -10
\frac{\partial}{\partial x}\Psi(t,x,\omega)dt + 1.174  d W(t,\omega),\eqno(4.1) $$

with initial condition
$$\Psi(0,x,\omega)=1/2 +15 cos x  +3cos 3x + cos 8x  +5 sin 3x + 15 sin 5x ,  \eqno(4.2)
$$
$$ ~(t,x,\omega ) \in
[0,\frac{\pi}{7}[\times [-\pi,\pi[  \times R^{N}.
$$
\begin{figure}[h]
\center{\includegraphics[width=0.8 \linewidth]{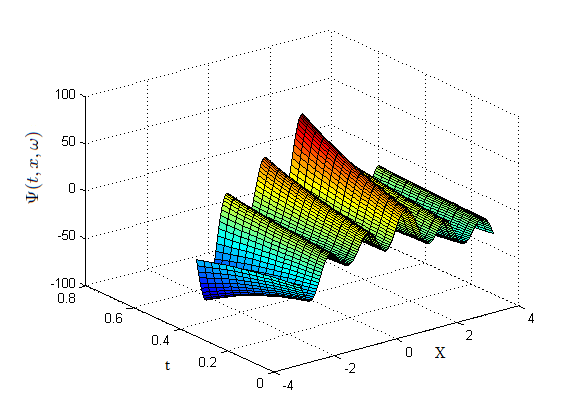}}
\caption{ Picture  from an animation of  Ornstein-Uhlenbeck  stochastic process  defined  by stochastic differential equation (4.1) with initial value problem (4.2)
 }
\label{ris:image}
\end{figure}

$>> N=10000;$

$s=1.174;$

$x1 =\mbox{ random}(\mbox{'Normal'},0,1,N,  1000);$

$A1=[-10,0,0,0,0,  0,0,0,0,0,  0,0,0,0,0,  0,0,0,0,0,  0,0]; $

$C1=[15,0,3,0,0  ,0,0,1,0,0, 0,0,0,0,0, 0,0,0,0,0]; $

$D1=[0,0,5,0,15,0,0,0,0,0, 0,0,0,0,0, 0,0,0,0,0];  $

$A10=2;   A20=0; C10=1; $

$\mbox{for}~ k=1:20$

$S1(k)=A10;   S2(k)=A20;$

$\mbox{for}~ n=1:10$

$S1(k)=S1(k)+(-1)^(n)*A1(2*n)*k^(2*n);$

$\mbox{end}$

$\mbox{end}$

$\mbox{for}~ k=1:20$

$O1(k)=0; $

$\mbox{end}$

$\mbox{for}~ k=1:20$

$\mbox{for} n=1:10$

$O1(k)=O1(k) +(-1)^n*A1(2*n+1)*k^(2*n+1);$

$\mbox{end}$

$\mbox{end}$

$[T1,X1]=\mbox{meshgrid}(0:( pi/100):2*pi, -pi : ( pi/100):pi);$

$\mbox{for}~ m=1:N$

$Z1=0.5* C10*exp(T1.*A10) +  s*x1(m,1)*( (s/sqrt(2*A10)) *exp(T1 *(-1)*A10).* exp(T1*2*A10)-1);$

$\mbox{for}~  k=1:20$

$Z1=Z1+ C1(k)*exp(T1*S1(k)).*cos(X1.*k).* cos(T1*O1(k))+D1(k)*exp(T1*S1(k)).*cos(X1.*k).* sin(T1*O1(k))+ D1(k)*exp(T1*S1(k)).* sin(X1.*k).* cos(T1*O1(k))- $

$ C1(k)*exp(T1*S1(k)).* sin(X1.*k).* sin(T1*O1(k))+  s*sqrt(2)*x1(m,k+1)* sin(pi*k*(exp(2*A10*T1)-1))/(pi*k);$

$\mbox{end}$

$\mbox{surf}~ (X1,T1,Z1)$

$\mbox{drawnow;}$

$\mbox{pause}(1);$

$\mbox{end}$

\end{ex}

\begin{ex}   Let consider  the Ornstein-Uhlenbeck  stochastic differential equation
$$
d\Psi(t,x,\omega )=2 \Psi(t,x,\omega)dt-
\frac{\partial}{\partial x}\Psi(t,x,\omega) dt
+  \sigma d W(t,\omega),\eqno(4.3)
$$

with initial condition
$$\Psi(0,x,\omega)=1/2+5cos x  +5 cos 5x ,   \eqno(4.4)
$$
$$ ~(t,x,\omega ) \in
[0,\frac{\pi}{7}[\times [-\pi ,\pi[\times R^{\infty}.
$$
Below we present the programm in Matlab which draw  a sample of the size  4  which are results of observations  to solutions of  the Ornstein-Uhlenbeck  stochastic differential equation $(4.3)-(4.4)$
at moment $t=\pi/7$.

\begin{figure}[h]
\center{\includegraphics[width=1.0\linewidth]{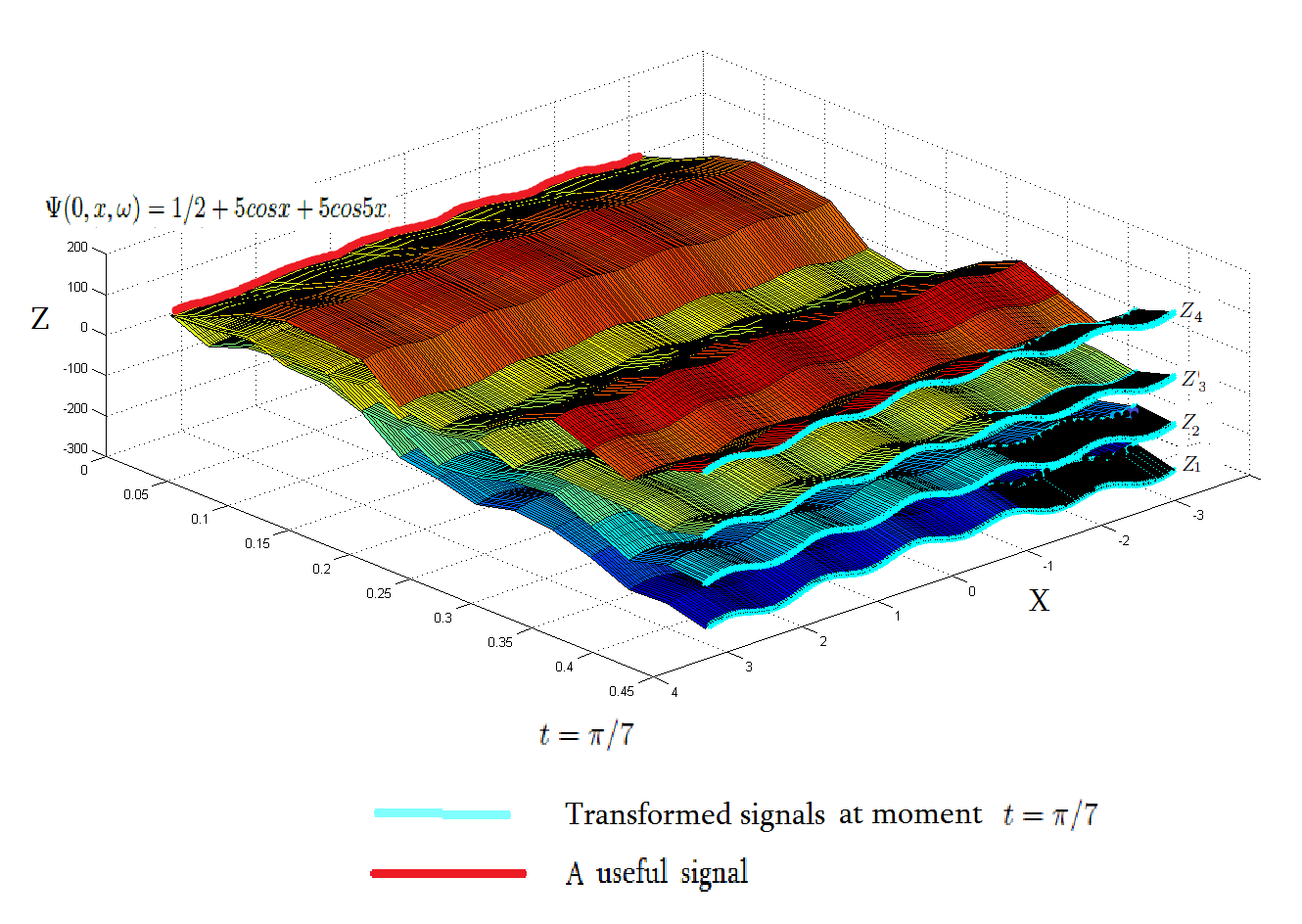}}
\caption{ A sample of the size  4  which   are results of observations  to the solutions  of  the Ornstein-Uhlenbeck  stochastic differential equation (4.3)-(4.4)  at moment $t=\pi/7$ when $\sigma=150$
 }
\label{ris:image}
\end{figure}

$>> N=4; $

$x1 = \mbox{random}('\mbox{Normal}',0,1,N,  1000);$

$A1 = [-1; 0; 0; 0; 0; 0; 0; 0; 0; 0; 0; 0; 0; 0; 0; 0; 0; 0; 0; 0; 0; 0];$

$C1 = [5; 0; 0; 0; 0; 0; 0; 0; 0; 0; 0; 0; 0; 0; 0; 0; 0; 0; 0; 0];$

$D1 = [0; 0; 0; 0; 5; 0; 0; 0; 0; 0; 0; 0; 0; 0; 0; 0; 0; 0; 0; 0];$

$A10 = 2; C10 = 1; s = 150;$

$for k=1:20$

$S1(k)=A10;$

$\mbox{for}~ n=1:10$

$S1(k)=S1(k)+(-1)^(n)*A1(2*n)*k^(2*n);$

$\mbox{end}$

$\mbox{end}$

$for k=1:20$

$O1(k)=0;$

$\mbox{end}$

$\mbox{for}~ k=1:20$

$\mbox{for}~ n=1:10$

$O1(k)=O1(k) +(-1)^n*A1(2*n+1)*k^(2*n+1);$

$\mbox{end}$

$\mbox{end}$

$[T1,X1]=meshgrid(0:( pi/100):pi/7, -pi : ( pi/100):pi);$

$for m=1:N$

$Z_{m}=0.5* C10*exp(T1*A10) + (s*x1(1,1)/sqrt(2*A10))*exp(-A10*T1).* (exp(2*A10*T1)-1);$

$\mbox{end}$

$for m=1:N$

$for  k=1:20$

$Z_{m}=Z_{m}+ C1(k)*exp(T1*S1(k)).*cos(X1.*k).* cos(T1*O1(k))+D1(k)*exp(T1*S1(k)).*cos(X1.*k).* sin(T1*O1(k))+ D1(k)*exp(T1*S1(k)).* sin(X1.*k).* cos(T1*O1(k))- $

$ C1(k)*exp(T1*S1(k)).* sin(X1.*k).* sin(T1*O1(k))+  (s/(sqrt(2*A10 )*pi*k))* sqrt(2)* x1(m,k+1)*exp(-A10*T1).* sin(pi*k*(exp(2*A10*T1)-1));$

$\mbox{end}$

$\mbox{end}$

$\mbox{surf}(X1,T1,Z_{1})$

$\mbox{hold ~on}$

$\mbox{surf}(X2,T2,Z_{2})$

$\mbox{hold~ on}$

$\mbox{surf}(X3,T3,Z_{3})$

$\mbox{hold ~on}$

$\mbox{surf}(X4,T4,Z_{4})$

$\mbox{hold ~off}$

$>>$

\end{ex}

\begin{ex}  Suppose that we have  a sample  $(Z_i)_{1 \le i \le n} \in ( FD^{(0)}[-\pi,\pi[)^n$ of size $n$. In our simulation, we have  that  $$Z_i=(\Psi(t_0,x,  \omega^{(i)}))_{x \in [-\pi,\pi[}$$
for $1 \le i \le n$, where $ \omega^{(i)}=(x^{(i)}_j)_{j \in N} \in {\bf R}^N $  is $\gamma$ -uniformly distributed  sequence  in ${\bf R}$ for each $1 \le i \le n$. For example, we can put
$x^{(i)}_j=\Phi^{-1}(\{j \sqrt{p_i} \})$,   where $p_i$ is $i$-th simple natural number for $i \in N$, $j \in N$, $\{\cdot\}$ denotes a fractal part of the real number and
$\Phi(t)=\frac{1}{\sqrt{2\pi}}\int_{-\infty}^t e^{-\frac{y^2}{2}}dy$  for $t \in R$. In that case we can simulate Wiener trajectory  $W(t,  (x^{(i)}_j)_{j \in N})$  as follows:
$$
W(t, (x^{(i)}_j)_{j \in N})   =x^{(i)}_0 t+ \sqrt{2}\sum_{n=1}^{\infty} x^{(i)}_n\frac{\sin \pi n t}{\pi n}.  \eqno (4.5)$$

Since

$$
\sigma \int_0^{t_0} e^{(t_0-\tau)\mathbb{A} } d W(\tau, (x^{(i)}_j)_{j \in N}))=  \frac{\sigma}{2A_0}e^{-A_0t_0}( x^{(i)}_0(e^{2A_0t_0}-1)+
$$
$$\sqrt{2}\sum_{n=1}^{\infty} x^{(i)}_n \frac{\sin (\pi n(e^{2A_0t_0}-1)) }{\pi n}),\eqno (4.6)
$$
we can simulate $Z_i$ as follows
$$Z_i= (\Psi(t_0,x,  (x^{(i)}_j)_{j \in N}))_{x \in [-\pi,\pi[}  = \frac{e^{tA_0}c_0}{2}+\sum_{k=1}^{\infty} e^{\sigma_k t}\Big((c_k\cos (\omega_k t)+
$$
$$d_k\sin(\omega_k t))\cos(\frac{k\pi x}{l})_{x \in[-\pi,\pi[}+
(d_k\cos (\omega_k t)-c_k \sin(\omega_k t))\sin(\frac{k\pi
x}{l})_{x \in [-\pi,\pi[}\Big)+$$
$$
 \frac{\sigma}{2A_0}e^{-A_0t_0}( x^{(i)}_0(e^{2A_0t_0}-1)+\sqrt{2}\sum_{n=1}^{\infty} x^{(i)}_n \frac{\sin (\pi n(e^{2A_0t_0}-1)) }{\pi n}).\eqno (4.7)
$$

  Suppose we want to estimate a useful signal $\Psi_0(x)_{x \in [-l,l[} \in FD^{(0)}$
defined by  $$
\Psi_0(x)_{x \in [-\pi,\pi[} =\frac{c_0}{2}+\sum_{k=1}^{\infty}c_k
\cos\Big(\frac{k \pi x}{l} \Big)_{x \in [-\pi,\pi[}+d_k \sin \Big(\frac{k \pi
x}{l} \Big)_{x \in [-\pi,\pi[}.\eqno (4.8)
$$

For a function $f \in FD^{(0)}([-\pi,\pi[ )$  we put :

$$\tilde{c}_0(f)=\frac{1}{\pi} \int_{-\pi}^{\pi}f(x)dx;\eqno (4.9)$$

$$ \tilde{c}_k(f)= \frac{1}{\pi} \int_{-\pi}^{\pi}\cos(kx)f(x)dx (k \in \mathbb{N});\eqno (4.10)$$

$$\tilde{d}_k(f)= \frac{1}{\pi} \int_{-\pi}^{\pi}\sin(kx)f(x)dx(k \in \mathbb{N}).\eqno (4.11)$$

Suppose that  all conditions  of Theorem  3.1 are satisfied. Then  following Theorem 3.1, an estimator $T_n$ of the  useful signal $\Psi_0$ is given by

$$ T_n((Z_i)_{1 \le i \le n})=\frac{e^{-t_0A_0}\tilde{c}_0(\frac{\sum_{i=1}^nZ_i}{n})}{2}+\sum_{k=1}^{\infty} e^{-\sigma_k t_0}\Big((\tilde{c}_k(\frac{\sum_{i=1}^n Z_i}{n})\cos (\omega_k t_0)+
$$
$$\tilde{d}_k(\frac{\sum_{i=1}^n Z_i}{n})\sin(\omega_k t_0))\cos(k x)_{x \in[-\pi,\pi[}+
(\tilde{d}_k(\frac{\sum_{i=1}^n Z_i}{n})\cos (\omega_k t_0)-
$$
$$
\tilde{c}_k(\frac{\sum_{i=1}^n Z_i}{n}) \sin(\omega_k t_0))\sin(kx)_{x \in[-\pi,\pi[}\Big)  \eqno(4.12)  $$

\medskip

Our next programm  draws  results of calculations of the estimate $T_{10}$ of  a useful signal when we have a sample of size 10  which are results of observations to the
  solutions   of  the Ornstein-Uhlenbeck  stochastic differential equation $(4.3)-(4.4)$ at moment $t_0=\pi/7$  and  $\sigma \in \{150; 1500; 7500; 15000 \} .$

\begin{figure}[h]
\center{\includegraphics[width=1.05\linewidth]{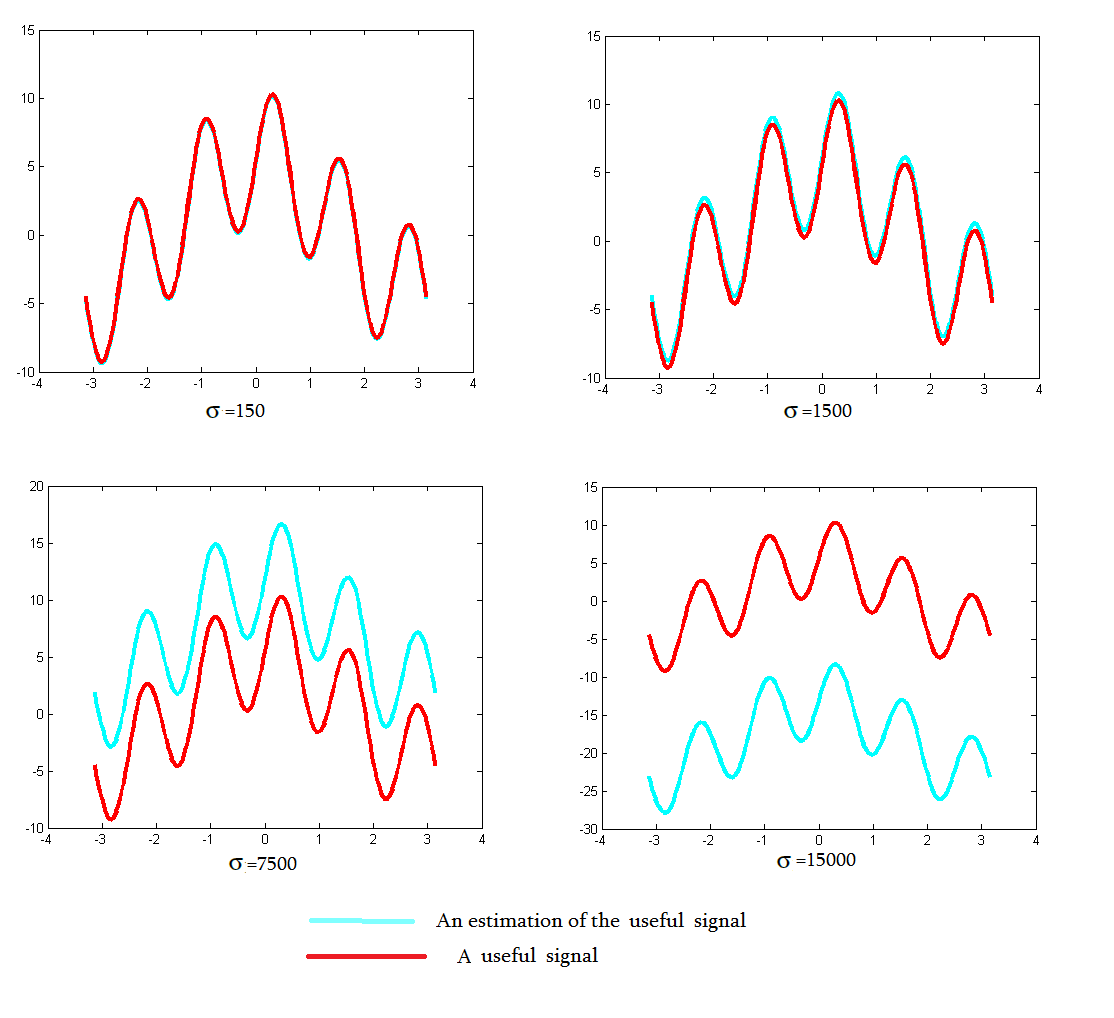}}
\caption{An estimation of the  useful signal  by using the statistic  $T_{10}$ in the Ornstein-Uhlenbeck process (4.3)-(4.4) by the sample of the size  $10$  which are results of observations to  transformed signals at moment $t_0=\pi/7$   when $\sigma \in \{150; 1500; 7500; 15000 \} $
 }
\label{ris:image}
\end{figure}

$>> N=1000;  M=10;   s=150;$

$x1 = \mbox{ random}(\mbox{'Normal'},0,1,N,  1000);$

$A1=[-1,0,0,0,0,  0,0,0,0,0,  0,0,0,0,0,  0,0,0,0,0,  0,0]; $

$C1=[5,0,0,0,0,0,0,0,0,0, 0,0,0,0,0, 0,0,0,0,0]; $

$D1=[0,0,0,0,5,0,0,0,0,0, 0,0,0,0,0, 0,0,0,0,0];$

$A10=2;   C10=1; $

$\mbox{for}~ k=1:20$

$S1(k)=A10;   S2(k)=A20;$

$\mbox{for} ~n=1:10$

$S1(k)=S1(k)+(-1)^{n}*A1(2*n)*k^{2*n};$

$\mbox{end}$

$\mbox{end}$

$\mbox{for} ~k=1:20$

$O1(k)=0;$

$\mbox{end}$

$\mbox{for} ~k=1:20$

$\mbox{for} ~n=1:10$

$O1(k)=O1(k) +(-1)^{n}*A1(2*n+1)*k^{2*n+1};$

$\mbox{end}$

$\mbox{end}$

$T1=pi/7;$

$X1=-pi:( pi/100):pi;$

$\mbox{for} ~ m=1:M$

$Z_{m}=0.5* C10*exp(T1*A10)  + 2/sqrt(2*A10)*exp(-A10*T1)*x1(m,1)*(exp(2*A10*T1)-1);$

$\mbox{end}$

$\mbox{for} ~ m=1:M$

$\mbox{for} ~ k=1:20$

$Z_{m}=Z_{m}+ C1(k)*exp(T1*S1(k)).*cos(X1.*k).* cos(T1*O1(k))+D1(k)*exp(T1*S1(k)).*cos(X1.*k).* sin(T1*O1(k))+ $

$D1(k)*exp(T1*S1(k)).* sin(X1.*k).* cos(T1*O1(k))- C1(k)*exp(T1*S1(k)).* sin(X1.*k).* sin(T1*O1(k))+ $

$ s*2/sqrt(2*A10)*exp(-A10*T1)*sqrt(2)*x1(m,k+1)*sin(pi*k*(exp(2*A10*T1)-1))/(pi*k);$

$\mbox{end}$

$\mbox{end}$

$W=0;$

$\mbox{for}~ m=1:M$

$W= W+Z_{m};$

$\mbox{end}$

$W=W/M;$

$c=0;$

$y=W;$

$\mbox{for} ~s=1:200$

$c=c+(2/(2*pi*200))*y(s);$

$\mbox{end}$

$\mbox{for} ~ m=1:20$

$a_{m}= 0; $

$b_{m}=0;$

$\mbox{end}$

$\mbox{for}  ~m=1:20$

$\mbox{for}~ k=1:200$

$a_{m}= a_{m}+(1/100)*y(k)*cos(m*X1(k));$

$b_{m}=b_{m}+(1/100)*y(k)*sin(m*X1(k));$

$\mbox{end}$

$\mbox{end}$

$Y=c*exp(-T1*A10)/2;$

$\mbox{for} ~k=1:20$

$Y=Y+exp(-S1(k)* T1) * (  (a_{k}*cos(-O1(k)* T1) + b_{k}*sin(-O1(k)* T1) ) *cos(k*X1)+( b_{k}*cos(-O1(k)* T1) -a_{k}*sin(-O1(k)* T1) )*sin(k*X1) )$

$\mbox{end}$

$Y3=5*cos(X1)+5*sin(5*X1)+C10/2;$

$\mbox{plot}(X1,Y,'c',X1,Y3,'r', \mbox{'LineWidth'},3)$

\end{ex}

\begin{figure}[h]
\center{\includegraphics[width=1.05\linewidth]{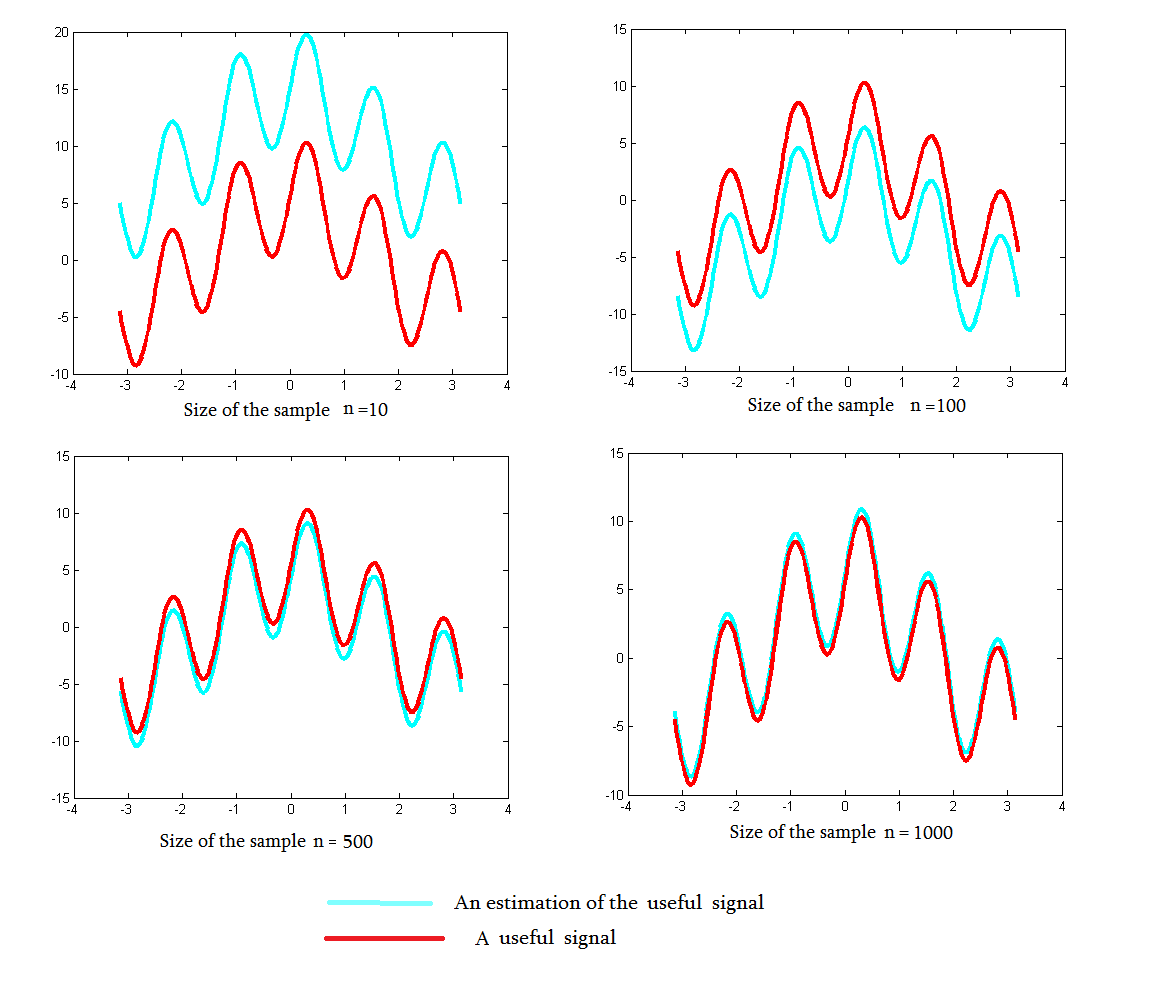}}
\caption{An estimation of the  useful signal  by  using the statistic $T_n$  in the Ornstein-Uhlenbeck process (4.5)-(4.6) by the  increase of a   size  of  the sample  when $\sigma=15000$
 }
\label{ris:image}
\end{figure}

\section{Discussion and conclusion}

If a transmitting  process  of  a  useful signal $\theta \in \Theta$ is  described by the  Ornstein-Uhlenbeck  stochastic system  (1.1)   and we  have
results of observations  $(Z_k)_{1 \le k \le n}$  on transformed signals at  any moment $t_0>0$,  then following Theorem 3.1, by  using  the statistic $T_n$
we  can  restore  $\theta$.

 Programs in Matlab prepared  in the  present  paper can be  described as follows:

(i) A programm  in Matlab  from  Example 4.1 demonstrates  animation of  a particular case  of the  Ornstein-Uhlenbeck  stochastic system  (1.1)   which are defined by  (4.1)-(4.2)(see  Figure 1).

(ii) A programm  in Matlab  from  Example 4.2  draws  and presents  a sample  $(Z_k)_{1 \le k \le n}$ of the size $n$  which consists from  results of observations to $n$  independent  transformed  signals  at moment $t=\pi/7$ when a transmitting  process of  a  useful signal $\theta$ is  described by the  Ornstein-Uhlenbeck  stochastic system  (1.1)  defined by  (4.3)-(4.4) (see  Example  4.2  and  Figure 2  for which $n=4$ ).

(iii) A programm  in Matlab  from  Example 4.3  draws  the  value  of the statistic $T_{n}$ (which is in  $C[-\pi,\pi[$)   calculated  for sample  $(Z_k)_{1 \le k \le n}$ of the size $n$  which consists from  results of observations  to $n$    independent  transformed  signals  at moment $t=\pi/7$ when a transmitting process  of  a  useful signal $\theta$ is  described by the  Ornstein-Uhlenbeck  stochastic system  (1.1)  defined by  (4.5)-(4.6). (see  Example  4.3  and  Figure 3 ).

From Figure  3  we see that the reduction of  the  parameter  $\sigma$  in   (4.3), for the  fixed size of the  sample(here, $n=10$)   increases the accuracy of the estimation of the  useful  signal   which  seems naturally.

Similarly, from Figure 4  we see   that an  increase    of  the size of the  sample, for a fixed  big  value of the parameter $\sigma$  in   (4.3)( here, $\sigma=1500$ ), also   increases the accuracy of the estimation of the  useful  signal   which  do not contradicts to the result of Theorem 3.1.

\bibliographystyle{amsplain}

\end{document}